\definecolor{darkred}{RGB}{139,0,0}
\definecolor{darkgreen}{RGB}{0,100,0}
\definecolor{darkmagenta}{RGB}{139,0,139}
\definecolor{darkpurple}{RGB}{110,0,180}
\definecolor{darkblue}{RGB}{40,0,200}
\definecolor{darkorange}{RGB}{255,140,0}
\theoremstyle{definition}
\newtheorem{thm}{Theorem} %  [section]
\newtheorem{algorithm}{Algorithm}
\newtheorem{rem}[thm]{Remark}
\newtheorem{lem}[thm]{Lemma}
\newtheorem{prop}[thm]{Proposition}
\newcommand\R{{\mathbb{R}}}
\newcommand\N{\mathbb{N}}
\newcommand\eps{\varepsilon}
\newcommand{\e}{\varepsilon}
\newcommand{\widebar}[1]{\mbox{\kern1.5pt\hbox{\vbox{\hrule height 0.6pt 
\kern0.35ex \hbox{\kern-0.15em \ensuremath{#1 }\kern0.0em}}}}\kern-0.1pt}
\newcommand{\E}{\mathbb{E}}
\renewcommand{\P}{\mathbb{P}}
\newcommand{\abs}[1]{\left\vert #1 \right\vert}
\newcommand{\norm}[1]{\left\Vert #1 \right\Vert}
\newlength{\fixboxwidth}
\title{Tractability of the approximation of high-dimensional rank one tensors}
\author{Erich Novak,
Daniel Rudolf
\\
Mathematisches Institut, Universit\"at Jena\\
Ernst-Abbe-Platz 2, 07743 Jena, Germany\\
email: 
erich.novak@uni-jena.de, 
daniel.rudolf@uni-jena.de}
\begin{document}

\maketitle

\begin{abstract}
We study the approximation of high-dimensional rank one tensors 
using point evaluations and consider deterministic as well as 
randomized algorithms. 
We prove that for certain parameters 
(smoothness and norm of the $r$th derivative) 
this problem is intractable while for other parameters 
the problem is tractable and the complexity is only
polynomial in the dimension for every fixed $\e >0$. 
For randomized algorithms 
we completely characterize the set of parameters 
that lead to easy 
or difficult problems, respectively. 
In the ``difficult'' case we modify the class to obtain 
a tractable problem: 
The problem gets tractable with a polynomial 
(in the dimension) complexity  if the support 
of the function is not too small. 
\end{abstract}

{\bf Keywords: } High dimensional approximation, rank one tensors, 
tractability, curse of dimensionality, dispersion.

{\bf Classification. Primary: 65Y20; Secondary: 41A25, 41A63, 65C05.}

\section{Introduction}  \label{sec1} 

Many real world problems are high-dimensional, they 
involve functions $f$ that depend on many variables. 
It is known that the approximation of functions 
from certain smoothness 
classes suffers from the curse of dimensionality,
i.e., the complexity (the cost of an optimal algorithm) 
is exponential in the dimension $d$. 
The recent papers \cite{HNUW12,NW09} contain such results 
for classical $C^k$ and also $C^\infty$ spaces, 
the known theory is presented 
in the books \cite{NW08,NW10,NW12}. 
To avoid this curse of dimensionality one studies problems with 
a structure, 
see again the monographs just mentioned and~\cite{MUV14}. 

One possibility is to assume that the function $f$, 
say $f: [0,1]^d \to \R$, is a tensor of rank one, i.e., $f$ 
is of the form 
\begin{equation*} 
f(x_1, x_2, \dots , x_d) = \prod_{i=1}^d f_i(x_i), 
\end{equation*} 
for $f_i\colon [0,1] \to \R$. 
For short we also write $f = \bigotimes_{i=1}^d f_i$.
At first glance, the ``complicated'' function $f$ 
is given by $d$ ``simple'' functions.
One might hope that with this model assumption the curse of 
dimensionality can be avoided. 

In the recent paper \cite{BDDG} the authors investigate how well a rank one 
function can be captured (approximated in $L_\infty$) 
from $n$ 
point evaluations. 
They use the function classes
\begin{equation*} 
F^r_{M,d} = \big\{ f \mid f = \bigotimes_{i=1}^d  f_i, \
\Vert f_i \Vert_\infty \le 1, \
\Vert f_i^{(r)} \Vert_\infty \le M \big\}
\end{equation*} 
and define an algorithm $A_n$ that uses $n$ function 
values.
Here, given an integer $r$ it is assumed that $f_i \in W_\infty^r[0,1]$, 
where $W_\infty^r[0,1]$ is the set of
all univariate functions on $[0,1]$ which have 
$r$ weak derivatives in $L_\infty$, and
$f_i^{(r)}$ is the $r$th weak derivative.

In \cite{BDDG} the authors consider an algorithm which consists of two phases. 
For $f\in F^r_{M,d}$, the first phase is looking for 
a $z^* \in [0,1]^d$ such that $f(z^*) \not = 0$, the
second phase takes this $z^*$ and constructs an approximation of $f$. 
The main error bound 
\cite[Theorem~5.1]{BDDG}
distinguishes two cases:
\begin{itemize}
\item 
If in the first phase no $z^*$ with $f(z^*) \not = 0$ 
was found, then $f$ itself
is close to zero, i.e. $A_n(f)=0$ satisfies
\begin{equation}  \label{bound} 
\Vert f - A_n(f) \Vert_\infty \le 
C_{d,r} M^d n^{-r} . 
\end{equation} 
\item 
If such a point $z^*$ is given in advance, 
then the second phase returns an approximation $A_n(f)$ 
and the bound 
\begin{equation} \label{z_known}
\Vert f - A_n(f) \Vert_\infty \le C_r \,M d^{r+1} n^{-r}
\end{equation}
holds.
Here $C_r>0$ is independent of $d,M,n$ and $n \ge d \max\{   (d C_r M )^{1/r},2\}$.

\end{itemize}

\begin{rem} 
The error bounds of \eqref{bound} and \eqref{z_known} 
are nice since the order of convergence
$n^{-r}$ is optimal. 
In this sense, which is the traditional point of view in numerical 
analysis, the authors of \cite{BDDG} correctly call 
their algorithm an optimal algorithm.
When we study the tractability of a problem 
we pose a different problem and want to know
whether 
the number of function evaluations,
for given spaces and an error bound 
$\e >0$, increases exponentially in the dimension $d$ or not. 
The curse of dimensionality may happen even for $C^\infty$ 
functions where the order of convergence is excellent, 
see \cite{NW09}.  

Consider again the bound \eqref{bound}. 
This bound is proved in \cite{BDDG} 
with the Halton sequence 
and hence 
we only have a non-trivial 
error bound if 
\begin{equation*} 
n  > 
\bigg( 2^d \prod_{i=1}^d p_i \bigg) \,  (2M)^{d/r} 
\end{equation*} 
where $p_1, p_2, \dots , p_d$ 
are the first $d$ primes. 
The number $n$ of needed function values for given parameters 
$(r, M)$ and error bound $\e <1$ increases \emph{always},
i.e., for all $(r, M, \e)$,   (super-) exponentially 
with the dimension $d$.    \qed 
\end{rem} 

We ask whether this \emph{curse of dimensionality} 
is inherent in the problem or whether it can be avoided 
by a different algorithm. 
We give a complete answer in the case of randomized algorithms.
It depends on $r$ and $M$, but not on $\e$. 
The curse of dimensionality is present for 
the classes $F^r_{M,d}$ if and only if $M \ge 2^r r! $. 
For smaller $M$ we construct a randomized algorithm 
that, for any fixed $\e >0$, has polynomial (in $d$) cost. 

To precisely formulate the results we need some further notation.
We want to recover a function $f$ 
from a class $F_d$ of functions defined on $[0,1]^d$.  
We consider the worst case error of an algorithm $A_n$ 
on $F_d$ and stress that $F_d$, in this paper, 
is not the unit ball with respect to some norm
since it is not convex. 
Hence we can not apply results that are based 
on this assumption, 
in particular 
we allow (and should allow) all adaptive 
algorithms
\begin{equation*}    \label{algo}    
A_n(f) = \phi ( f(x_1), f(x_2), \dots , f(x_n)) , 
\end{equation*}    
with $\phi \colon \R^n \to L_\infty$,
where the $x_i \in [0,1]^d$ can be chosen adaptively, 
depending  on the already known function values 
$f(x_1), \dots , f(x_{i-1})$. 
See, for example, 
\cite{No96,NW08}. 
The worst case error of a deterministic algorithm
$A_n$ is defined as   
$$    
e^{\rm det}(A_n,F_d)=\sup_{f\in F_d}  \Vert f - A_n(f) \Vert_\infty ,
$$    
whereas the $n$th minimal worst case error is     
\begin{equation}  \label{eq:error}   
e^{\rm det}(n, F_d) = \inf_{A_n}  e^{\rm det}(A_n,F_d), 
\end{equation}     
where $A_n$ runs through the set of all deterministic
algorithms that use at most $n$ function values. 

\section{Curse of dimensionality for large values of $M$}  \label{sec1a} 

One might guess that the whole problem is easy, since $f$ is given by 
the $d$ univariate functions $f_1, f_2, \dots , f_d$. We will see that this 
is \emph{not} the case 
for the classes $F^r_{M,d}$ if $M \ge 2^r r!$.
Let us start with considering the initial error $e^{\rm det} (0, F^r_{M,d})$.
We have $e^{\rm det} (0, F^r_{M,d})=1$, 
since all 
possible inputs satisfy $-1 \le f \le 1$ and it is obvious that the trivial 
algorithm $A_0(f)=0$ 
is optimal, if we do not 
compute any information on $f$.  
Further there is a function $g\in W_\infty^r[0,1]$ 
with $\Vert g \Vert_\infty =1$ and $\Vert g^{(r)}\Vert_\infty =2^r r! $ 
such that the support of $g$ is $[0,1/2]$ or $[1/2, 1]$. 
The $2^d$ tensor products of such functions show that the initial 
error $1$ of the problem cannot be reduced by less than $2^d$ function
values.  We obtain the following result. 

\begin{thm}  \label{thm: curse}
Let $r \in \N$ and $M \ge 2^r r!$. Then 
\begin{equation*}  \label{bound1} 
e^{\rm det}(n , F^r_{M,d} ) 
% = \inf_{A_n} \sup_{f\in F_d} \norm{f-A_n(f)}_{\infty} 
= 1 \qquad \hbox{for} \qquad n=1,2, \dots, 2^d -1. 
\end{equation*} 
\end{thm}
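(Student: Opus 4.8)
The plan is to exhibit, for each sign pattern $\sigma\in\{0,1\}^d$, a rank one function $f_\sigma\in F^r_{M,d}$ whose supports are arranged so that any $n\le 2^d-1$ point evaluations cannot distinguish $f_\sigma$ from the zero function for at least one $\sigma$; since $\|f_\sigma\|_\infty=1$ for a suitable choice, this forces $e^{\rm det}(n,F^r_{M,d})\ge 1$, and the matching upper bound $\le 1$ is immediate from $e^{\rm det}(0,F^r_{M,d})=1$ together with monotonicity of the $n$th minimal error in $n$ (more information cannot increase the error, since $A_n=0$ is always available). So the content is the lower bound.

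First I would fix the univariate building block: by the remark preceding the theorem there is $g\in W_\infty^r[0,1]$ with $\|g\|_\infty=1$, $\|g^{(r)}\|_\infty=2^r r!$ and $\mathrm{supp}(g)=[0,1/2]$; let $g_0:=g$ and let $g_1$ be its reflection, so $\mathrm{supp}(g_1)=[1/2,1]$, with the same norms. Since $M\ge 2^r r!$, each $g_j$ lies in the univariate class underlying $F^r_{M,d}$. For $\sigma=(\sigma_1,\dots,\sigma_d)\in\{0,1\}^d$ put $f_\sigma:=\bigotimes_{i=1}^d g_{\sigma_i}$; then $f_\sigma\in F^r_{M,d}$, $\|f_\sigma\|_\infty=1$, and $\mathrm{supp}(f_\sigma)=\prod_{i=1}^d I_{\sigma_i}$ with $I_0=[0,1/2]$, $I_1=[1/2,1]$. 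The key combinatorial point is that a point $x\in[0,1]^d$ lies in $\mathrm{supp}(f_\sigma)$ for at most one $\sigma$ with all coordinates in the open halves, and more usefully: for a fixed $x$, the set of $\sigma$ for which $f_\sigma(x)\ne 0$ is contained in a single such product cell, hence a single $\sigma$ once we take the $g_j$ to vanish on the closed complementary half and be nonzero on the open half (choosing $g$ so that $g>0$ on $(0,1/2)$ takes care of this).

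Now run the adaptive adversary argument. Suppose $A_n$ uses nodes $x_1,\dots,x_n$ (adaptively chosen). We feed $A_n$ the all-zero answers, which is consistent with $f\equiv 0\in F^r_{M,d}$; this fixes the nodes $x_1,\dots,x_n$ a posteriori and forces the algorithm's output to be some fixed element $A_n(0)\in L_\infty$. Each node $x_k$ has $f_\sigma(x_k)\ne 0$ for at most one $\sigma\in\{0,1\}^d$ (by the support disjointness above), so the $n$ nodes ``kill'' at most $n\le 2^d-1$ of the $2^d$ functions $f_\sigma$; pick $\sigma^*$ not killed. Then $f_{\sigma^*}(x_k)=0=0(x_k)$ for all $k$, so $A_n$ cannot distinguish $f_{\sigma^*}$ from $0$ and returns the same output $y:=A_n(0)$ on both inputs. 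Hence
\[
2\,e^{\rm det}(A_n,F^r_{M,d})\ \ge\ \|f_{\sigma^*}-y\|_\infty+\|0-y\|_\infty\ \ge\ \|f_{\sigma^*}\|_\infty\ =\ 1,
\]
which only gives $1/2$; to get the sharp constant $1$ one instead notes that $y$ must approximate \emph{both} $f_{\sigma^*}$ and $-f_{\sigma^*}$ (the latter also lies in $F^r_{M,d}$, and also vanishes exactly where $f_{\sigma^*}$ does since we may take $g\le 0$ replaced by $\pm g$), so $2e^{\rm det}\ge\|f_{\sigma^*}-(-f_{\sigma^*})\|_\infty=2$, giving $e^{\rm det}(A_n,F^r_{M,d})\ge 1$. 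Taking the infimum over $A_n$ yields $e^{\rm det}(n,F^r_{M,d})\ge 1$ for $n\le 2^d-1$, and combined with the upper bound this is equality.

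I expect the main obstacle to be purely bookkeeping: making the support/sign statements precise enough that ``at most one $\sigma$ per node'' is genuinely true (this needs $g$ to vanish on the \emph{closed} half it is supposed to be zero on, and to be nonzero on the interior of the other half, so the only ambiguity is on the measure-zero hyperplanes $x_i=1/2$, which one handles by declaring $g(1/2)=0$ so that a node on such a hyperplane kills \emph{no} $f_\sigma$), and handling adaptivity cleanly — but adaptivity causes no real trouble here because the adversary's answers are all $0$, so the node sequence is effectively nonadaptive once we commit to the zero responses. The verification that $g\in W_\infty^r$ with the stated norms — already asserted in the excerpt — is the only analytic ingredient and can be quoted.
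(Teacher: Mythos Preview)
Your proposal is correct and follows essentially the same approach as the paper: feed the adaptive algorithm all-zero answers to fix its nodes, use pigeonhole on the $2^d$ dyadic orthants to find one containing no sample point (equivalently, a $\sigma^*$ with $f_{\sigma^*}(x_k)=0$ for all $k$), and then play $\pm f_{\sigma^*}$ against each other to force error at least $1$. The paper even uses the explicit choice $g(t)=2^r\max\{0,(1/2-t)^r\}$ for the univariate bump, and your bookkeeping remark about boundary points $x_i=1/2$ (where both $g_0$ and $g_1$ vanish, so such nodes kill no $f_\sigma$) cleanly handles the only subtlety the paper leaves implicit.
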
   

\begin{proof}
Assume that $A_n$ is a deterministic (possibly adaptive) algorithm and 
$n\leq 2^d -1$. 
Since $f_0=0$ is in the space $F^r_{M,d}$ 
there are function values 
$f(x_1)= \dots = f(x_n)=0$ that are computed for 
the function $f=f_0$. 
Since $n\leq 2^d -1$ there is at least one orthant 
of $[0,1]^d$ which contains no sample point. 
Without loss of generality we assume that this 
orthant is $[0,1/2]^d$. The function 
$f^+ = \bigotimes_{i=1}^d f_i$ with
\[
% f_i(y_i) = 2^r \max\{ 0, (\frac{1}{2}-y_i)^r \}
f_i(t) = 2^r \max\{ 0, (\frac{1}{2}-t)^r \}, \qquad t\in [0,1]  
\]
is zero on $[0,1]^d \setminus [0,1/2]^d$, 
is an element of $F^r_{M,d}$ for $M \ge 2^r r! $ 
and  $f^+ (0)=1$. 
By construction we have 
$f^+(x_1) = \dots f^+(x_n)=0$ and, hence, 
$A_n(f^+) = A_n(-f^+) = A_n(f_0)$, since $A_n$ cannot 
distinguish those three inputs. From 
$\Vert f^+ - (-f^+) \Vert_\infty =2$ we conclude that 
$e^{\rm det}(A_n,F_{M,d}^r) \ge 1$ and hence
that 
\[
e^{\rm det}(n, F^r_{M,d}) \ge 1.
\]
The inequality $e^{\rm det}( n, F^r_{M,d}) \le 1$ is trivial since the zero 
algorithm has error 1. 
\end{proof}

In this paper we also analyze randomized algorithms $A_n$, i.e., 
the $x_i$ and also $\phi$ may be chosen randomly, see
Section 4.3.3 of \cite{NW08}. 
Then the output $A_n(f)$ is a random variable and 
the worst case error of such an algorithm on a class $F_d$ is defined by
\begin{equation*}  \label{err:ran} 
e^{\rm ran}(A_n,F_d) = \sup_{f \in F_d}  
\left( \E (\Vert f - A_n (f) \Vert_{\infty} )^2 \right)^{1/2}.
\end{equation*} 
Similarly to \eqref{eq:error}
the numbers 
$e^{\rm ran} (n, F_d)$ are again defined by the infimum 
over all $e^{\rm ran}(A_n,F_d)$ but now of course we allow randomized algorithms. 

Theorem~\ref{thm: curse} is for deterministic algorithms.
Already the authors of \cite{BDDG} suggest that randomized
algorithms might be useful for this problem. We will see
that this is true
if $M < 2^r r!$ but not for larger $M$. 
This follows from the results of Section 2.2.2 in \cite{No88}. 

\begin{thm}  \label{thm: curse2}
Let $r \in \N$ and $M \ge 2^r r!$. Then 
\begin{equation*} \label{bound2} 
e^{\rm ran} (n , F^r_{M,d} ) \ge \frac{1}{2} \sqrt{2} \qquad \hbox{for} 
\qquad n=1,2, \dots, 2^{d-1}  . 
\end{equation*} 
\end{thm}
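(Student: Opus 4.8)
The plan is to adapt the adversary argument of Theorem~\ref{thm: curse} to the randomized setting by using the standard reduction of randomized to deterministic lower bounds: it suffices to exhibit a probability measure on inputs in $F^r_{M,d}$ on which every deterministic algorithm using at most $n$ values has large expected error, and then invoke the minimax / Yao-type principle (this is exactly the content of Section~2.2.2 of~\cite{No88}, which we are allowed to cite). So first I would fix the family of $2^d$ ``bump'' tensors: for each sign vector $\sigma \in \{-1,+1\}^d$ (or, more economically, for each of the $2^d$ orthants of $[0,1]^d$), let $g_\sigma = \pm\bigotimes_{i=1}^d f_i$ where $f_i(t) = 2^r \max\{0,(\tfrac12 - t)^r\}$ or its reflection, so that $g_\sigma$ is supported on a single orthant, lies in $F^r_{M,d}$ for $M \ge 2^r r!$, and has $\|g_\sigma\|_\infty = 1$. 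Crucially, distinct $g_\sigma$ have disjoint supports (up to a null set), so $\|g_\sigma - g_{\sigma'}\|_\infty = 2$ whenever $\sigma \ne \sigma'$ as orthant-with-sign data.

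Next I would put the uniform distribution $\mu$ on these $2^d$ functions (or on a suitable sub-collection of size $2^{d-1}$, which is why the bound runs only to $n = 2^{d-1}$ — one should keep one ``free'' orthant per pair). Run any fixed deterministic algorithm $A_n$ using $n \le 2^{d-1}$ points $x_1,\dots,x_n$ on a random $f \sim \mu$. The key combinatorial step: with probability at least $1/2$ over $f$, none of the (at most $2^{d-1}$) sample points lies in the support orthant of $f$, because the supports are disjoint and there are at least $2^d$ orthants, so at most $2^{d-1}$ of them can be ``hit''. On that event the algorithm sees only zero values, so $A_n(f)$ is a fixed element $\psi \in L_\infty$ independent of $f$; and conditionally on the event, $f$ is still (essentially) uniform over at least $2^{d-1}$ functions with pairwise $L_\infty$-distance $2$, so no single $\psi$ can be within distance less than $1$ of more than one of them. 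Hence $\E\big(\|f - A_n(f)\|_\infty^2 \mid \text{event}\big) \ge \tfrac12 \cdot 2^2$ once the conditional distribution spreads over at least two such functions, giving $\E \|f - A_n(f)\|_\infty^2 \ge \tfrac12 \cdot 2 = 1$, whence $e^{\rm ran}(n,F^r_{M,d}) \ge \sqrt{\tfrac12 \cdot 2} = \tfrac12\sqrt{2}$ after one takes square roots and is careful with the constants.

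The main obstacle is bookkeeping the constants and the exact range of $n$: one must choose the prior $\mu$ so that, conditioned on the ``all samples see zero'' event, the posterior genuinely charges at least two (in fact many) of the mutually-distance-$2$ bumps, which forces the restriction $n \le 2^{d-1}$ rather than $2^d-1$, and one must track whether the factor is $\tfrac12$ from the miss-probability or from the averaging over the two surviving bumps — getting $\tfrac12\sqrt2$ rather than, say, $1$ or $\tfrac1{\sqrt2}$ requires care here. A clean way to organize it is to pair the orthants so that $f$ is chosen by first picking one of $2^{d-1}$ pairs uniformly and then a sign; then ``miss'' and ``two survivors'' are handled in one stroke. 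The remaining verifications — that $f_i \in W_\infty^r$ with $\|f_i\|_\infty \le 1$ and $\|f_i^{(r)}\|_\infty = 2^r r!$, and the reduction principle itself — are exactly as in Theorem~\ref{thm: curse} and~\cite[Sec.~2.2.2]{No88} and need no new ideas.
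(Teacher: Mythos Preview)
Your approach is correct and coincides with the paper's: both construct the $2^d$ disjoint-support bump tensors from Theorem~\ref{thm: curse} and then invoke Bakhvalov's averaging technique (the paper simply cites \cite[Sec.~2.2.2]{No88} without writing out the details you sketch).

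Your constant bookkeeping is muddled, however --- and note that $\tfrac{1}{\sqrt{2}} = \tfrac12\sqrt{2}$, so these are not competing alternatives. The clean way to extract the constant: take $\mu$ uniform on $\{\pm h_1,\dots,\pm h_{2^d}\}$; for any deterministic $B_n$ with $n\le 2^{d-1}$, the ``all-zero'' trajectory meets at most $n$ of the $2^d$ supports, so with $\mu$-probability at least $\tfrac12$ the output is a fixed $\psi$. For each untouched index $i$, the triangle inequality gives $\|h_i-\psi\|_\infty + \|{-}h_i-\psi\|_\infty \ge 2$, and convexity of $t\mapsto t^2$ yields $\tfrac12\bigl(\|h_i-\psi\|_\infty^2 + \|{-}h_i-\psi\|_\infty^2\bigr) \ge 1$. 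Hence $\E_\mu\|f-B_n(f)\|_\infty^2 \ge \tfrac12\cdot 1$, and taking the square root gives exactly $\tfrac12\sqrt{2}$.
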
   

\begin{proof} 
As in the proof of Theorem~\ref{thm: curse} we can construct $2^d$
functions $h_1,\dots,h_{2^d}$ such that the $h_i$ have disjoint supports, 
$\Vert h_i \Vert_\infty =1$  
and $\pm h_i \in F^r_{M,d}$. 
Therefore the statement follows 
(with the technique of Bakhvalov), see 
Section 2.2.2 in \cite{No88}
for the details. 
\end{proof}

%E   What do these lower bounds \eqref{bound1} and \eqref{bound2} mean? 
We may say that this assumption, $f$ being a rank one tensor, 
is not a good assumption to avoid the curse of dimension,
at least if we study the approximation problem with 
standard information (function values) and if $M \ge 2^r r!$. 
If $M$ is ``large'' then the classes $F^r_{M,d}$ are too large 
and we have the curse of dimensionality. 
%E   What is the problem? 
A function $f \in F^r_{M,d}$ can be non-zero just in a small 
subset of the cube $[0,1]^d$ and still have a large norm 
$\Vert f \Vert_\infty$.
Then it might be difficult to find 
a $z^*$ with $f(z^*) \not= 0$. 
Only if we know such a point $z^*$ we can replace 
the multivariate problem by $d$ univariate problems.

Then one can apply the second phase of 
the algorithm of \cite{BDDG}. 
This second phase has been completely analyzed in \cite{BDDG}. 
For future reference, we state this result 
as a lemma.

\begin{lem}  \label{lemma1} 
Let $r \in \N$ and $M>0$. 
Consider all $f \in F^r_{M,d}$ with $f \not= 0$ and assume that
a $z^* \in [0,1]^d$ is known such that 
$f(z^*) \not = 0$. 
Then, if $n>d \max\{(d C_r M)^{1/r},2\}$,  there
is an algorithm $A_n$ with 
\begin{equation*} 
\norm{f-A_n(f)}_{\infty}\leq C_r \,M  d^{r+1}   n^{-r} . 
\end{equation*} 
\end{lem}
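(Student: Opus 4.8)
The plan is to reduce the $d$-dimensional recovery problem to $d$ independent univariate recovery problems once a point $z^*$ with $f(z^*)\neq 0$ is available, exactly following the second phase of the algorithm in \cite{BDDG}. Write $z^*=(z_1^*,\dots,z_d^*)$ and set $c=f(z^*)=\prod_{i=1}^d f_i(z_i^*)\neq 0$, so in particular $f_i(z_i^*)\neq 0$ for every $i$. For each coordinate $i$, define the univariate function
\begin{equation*}
g_i(t) = \frac{f(z_1^*,\dots,z_{i-1}^*,t,z_{i+1}^*,\dots,z_d^*)}{c^{(d-1)/d}} \Big/ \text{(suitable normalization)},
\end{equation*}
which up to a known multiplicative constant equals $f_i(t)$; the key point is that $g_i$ can be evaluated at any $t$ using one function value of $f$. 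Since $\Vert f_i\Vert_\infty\le 1$ and $\Vert f_i^{(r)}\Vert_\infty\le M$, each $f_i$ lies in a standard univariate Sobolev-type ball, and classical univariate approximation theory (e.g.\ spline or polynomial interpolation on a uniform grid of $m$ points) recovers $f_i$ in $L_\infty[0,1]$ with error $\le C_r M m^{-r}$ using $m$ evaluations of $g_i$, hence $m$ evaluations of $f$.

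Next I would reassemble an approximation $\widetilde f = \bigotimes_{i=1}^d \widetilde f_i$ of $f$ from the univariate approximants $\widetilde f_i$, using a total of $n\approx d\cdot m$ function values. The error of a tensor product is controlled by a telescoping (hybrid) argument: writing
\begin{equation*}
f - \widetilde f = \sum_{i=1}^d \Big(\bigotimes_{j<i}\widetilde f_j\Big)\otimes (f_i-\widetilde f_i)\otimes\Big(\bigotimes_{j>i} f_j\Big),
\end{equation*}
and bounding each of the $d$ terms in $L_\infty$ by $\prod_{j<i}\Vert\widetilde f_j\Vert_\infty \cdot \Vert f_i-\widetilde f_i\Vert_\infty\cdot\prod_{j>i}\Vert f_j\Vert_\infty$. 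Using $\Vert f_j\Vert_\infty\le 1$ and, provided $m$ is large enough, $\Vert\widetilde f_j\Vert_\infty\le 1$ as well (the univariate approximant inherits an $O(1)$ bound once the error is small), each term is at most $C_r M m^{-r}$, so $\Vert f-\widetilde f\Vert_\infty\le d\,C_r M m^{-r}$. Substituting $m = n/d$ gives $\Vert f-\widetilde f\Vert_\infty\le d^{r+1} C_r M n^{-r}$, which is the claimed bound; the condition $n> d\max\{(dC_rM)^{1/r},2\}$ is exactly what guarantees $m\ge \max\{(dC_rM)^{1/r},2\}$, i.e.\ enough points per coordinate for the univariate estimate and the $\Vert\widetilde f_j\Vert_\infty\le 1$ control to kick in.

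The only genuinely delicate point is the normalization: one must extract the individual factors $f_i$ (up to sign/scaling) from the single number $c$ and the one-dimensional slices through $z^*$, and verify that the reconstructed product has the right scaling — the slices give $f_i(t)\prod_{j\neq i}f_j(z_j^*) = f_i(t)\cdot c/f_i(z_i^*)$, so dividing the product of all $d$ slices by $c^{d-1}$ recovers $\prod_i f_i(t_i) = f(t_1,\dots,t_d)$. Since this is precisely the content of the second phase analyzed in \cite{BDDG}, I would simply cite \cite[Theorem~5.1]{BDDG} for this step rather than reproduce it; the proof of the lemma is then essentially a restatement of that result in the form convenient for the tractability analysis that follows.
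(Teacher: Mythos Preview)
Your proposal is correct and matches the paper's treatment: the paper does not prove this lemma at all but simply states it as a restatement of the second-phase error bound \cite[Theorem~5.1]{BDDG}, exactly as you conclude in your final paragraph. Your sketch of the slice-and-telescope argument is in fact more detailed than what the paper provides, since the paper merely cites \cite{BDDG} and moves on.
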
 

The algorithm $A_n$ for Lemma~\ref{lemma1} 
from \cite{BDDG} is completely constructive; 
this is important since, in the present paper, we also 
speak about the \emph{existence} of algorithms in cases were 
we do \emph{not} have a construction.

We see two possibilities to obtain 
positive tractability results 
for the
approximation of high-dimensional rank one tensors
using function values. 
Both of them are considered in this paper:

\begin{itemize} 

\item
We study the same class $F^r_{M,d}$ but with 
``small'' values of $M$, i.e., 
$M < 2^r r! $. 
We do not have 
the curse of dimension for this class of functions
if we allow randomized algorithms, 
but of course we need other 
methods
than those of 
\cite{BDDG} to prove tractability.
See Section~\ref{sec2}. 

\item
We allow an arbitrary $M>0$ but study the smaller class 
\begin{equation*} 
F^{r,V}_{M,d} = \{ f \in F^r_{M,d}    \mid 
f(x) \not= 0 \hbox{ for all $x$ from a box with volume greater than $V$} \}.
\end{equation*} 
By a box we mean a set of the form 
$\prod_{i=1}^d [\alpha_i, \beta_i] \subset [0,1]^d$. 
If $V$ is not too small then, this is what we will prove, 
the problem is polynomially tractable
and the curse of dimensionality disappears. 
Again we need new algorithms to prove this result. 
In Section~\ref{sec3} we study deterministic as well as randomized 
algorithms. 

\end{itemize} 

We end this section with a few more definitions 
and remarks.  
Sometimes it is more convenient to discuss the inverse function
of $e^{\rm det}(n, F_d)$,
\begin{equation*} 
n^{\rm det}(\e, F_d) = \inf \{ n \mid e^{\rm det}(n, F_d) \le \e \} ,
\end{equation*} 
instead of $e^{\rm det}(n, F_d)$ itself.
The numbers 
$n^{\rm ran} (\e, F_d)$ are defined similarly. 
We say that a problem suffers from the curse of 
dimensionality in the deterministic setting, 
if $n^{\rm det}(\e , F_d) \ge C \, \alpha^d$ for some $\e >0$,
where $C>0$ and $\alpha >1$.

In this paper we say that
the complexity in the deterministic setting is polynomial in the dimension 
if
$n^{\rm det}(\e, F_d) \le C \, d^\alpha$ for each
fixed $\e >0$, where 
$C>0$ and $\alpha >1$ may depend on $\e$. 
We stress that the notions ``polynomially tractable'' 
and 
``quasi-polynomially tractable'' that are used in the literature 
are more demanding and are not used in Section~\ref{sec2}.
By replacing $n^{\rm det}(\e, F_d)$ by $n^{\rm ran}(\e, F_d)$
the curse of dimensionality and ``polynomial 
in the dimension''
are defined also in the randomized setting.

\section{Tractability for small values of $M$} \label{sec2} 

Here we 
study the class $F^r_{M,d}$ and assume that 
$M < 2^r r! $ and $\eps \in (0,1)$. 
We show that 
we do not have 
the curse of 
dimensionality
for this class of functions.

We start 
with a simple observation which follows by standard error bounds 
for polynomial interpolation of a smooth function.

\begin{lem} \label{lem: poly_interpol}
Let $a,b\in\R$ with $a<b$ and $g \in W^r_{\infty}[a,b]$, 
further assume that $g$ has $r$ distinct zeros.
Then
\begin{equation} \label{eq: virtue1}
\norm{g}_{\infty} \leq \norm{g^{(r)}}_{\infty}\frac{(b-a)^{r}}{r!} .
\end{equation}
If $\norm{g}_{\infty}\geq \eps$ and $\norm{g^{(r)}}_\infty \leq M$ 
we have
\begin{equation} \label{eq: virtue2}
\lambda_1(\{ g \not = 0 \}) \geq \left(\frac{r! \eps}{M}\right)^{1/r},
\end{equation}
where $\lambda_1$ denotes the $1$-dimensional Lebesgue measure.
\end{lem}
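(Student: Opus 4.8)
The plan is to prove \eqref{eq: virtue1} first via a standard interpolation-error argument, and then derive \eqref{eq: virtue2} as a short corollary by contraposition. For \eqref{eq: virtue1}, let $t_1 < t_2 < \dots < t_r$ be the $r$ distinct zeros of $g$ in $[a,b]$, and let $p$ be the polynomial of degree at most $r-1$ interpolating $g$ at these nodes; since $g$ vanishes at all $t_j$, we have $p \equiv 0$. The classical remainder formula for polynomial interpolation of a function with $r$ weak derivatives in $L_\infty$ gives, for each $x \in [a,b]$,
\[
g(x) = g(x) - p(x) = \frac{g^{(r)}(\xi_x)}{r!} \prod_{j=1}^r (x - t_j)
\]
for some $\xi_x \in [a,b]$ (in the weak-derivative setting one uses the integral form of the remainder, e.g. a Peano-kernel or divided-difference representation, but the bound is the same). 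Taking absolute values and using $\abs{x - t_j} \le b-a$ for every $j$ together with $\abs{g^{(r)}(\xi_x)} \le \norm{g^{(r)}}_\infty$ yields $\abs{g(x)} \le \norm{g^{(r)}}_\infty (b-a)^r / r!$, and taking the supremum over $x$ proves \eqref{eq: virtue1}.

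For \eqref{eq: virtue2}, suppose toward a contradiction that the set $\{g \neq 0\}$ has one-dimensional Lebesgue measure strictly less than $(r!\,\eps/M)^{1/r}$. The set $\{g \neq 0\}$ is open (as $g$ is continuous, being in $W^r_\infty$ with $r\ge 1$), hence a countable disjoint union of open intervals; pick any one such interval $(\alpha,\beta)$, which has length $\beta - \alpha < (r!\,\eps/M)^{1/r}$. On $[\alpha,\beta]$, $g$ vanishes at both endpoints, so if $r \ge 2$ we can apply \eqref{eq: virtue1} on $[\alpha,\beta]$ directly (the two endpoint zeros plus, if needed, additional zeros — here I would instead note the cleaner route below); for $r=1$ the mean value / fundamental theorem argument suffices. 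Actually the cleanest uniform argument: on $[\alpha,\beta]$ the function $g$ has the zero $\alpha$, and by Rolle-type reasoning $g^{(k)}$ has a zero in $[\alpha,\beta]$ for $k=0,\dots,r-1$; integrating $g^{(r)}$ repeatedly from these zeros gives $\norm{g}_{L_\infty[\alpha,\beta]} \le \norm{g^{(r)}}_\infty (\beta-\alpha)^r / r! \le M (\beta-\alpha)^r/r! < \eps$. Since this holds on every component interval of $\{g\neq 0\}$ and $g \equiv 0$ elsewhere, we get $\norm{g}_\infty < \eps$, contradicting $\norm{g}_\infty \ge \eps$. Hence $\lambda_1(\{g\neq 0\}) \ge (r!\,\eps/M)^{1/r}$.

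The main obstacle is purely technical: making the interpolation remainder formula rigorous for $g \in W^r_\infty$ rather than $g \in C^r$, since the pointwise Lagrange remainder with an evaluation of $g^{(r)}$ at a single point $\xi_x$ is not literally available when $g^{(r)}$ is only defined almost everywhere. I would handle this either by a density/mollification argument (approximate $g$ in $W^r_\infty$ by smooth functions, apply the classical bound, and pass to the limit), or by writing the remainder in Peano-kernel form $g(x) = \frac{1}{(r-1)!}\int_a^b K_x(t)\,g^{(r)}(t)\,\dint t$ with $\int_a^b \abs{K_x(t)}\,\dint t \le (b-a)^r/r!$, which gives the estimate directly from $\norm{g^{(r)}}_\infty$. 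Everything else — the openness of $\{g\neq 0\}$, the decomposition into intervals, and the endpoint-zero bookkeeping — is routine.
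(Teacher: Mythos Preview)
Your proof of \eqref{eq: virtue1} is correct and coincides with the paper's: interpolate $g$ at its $r$ zeros, observe that the interpolant is identically zero, and bound the remainder by $\|g^{(r)}\|_\infty(b-a)^r/r!$. Your Peano-kernel remark handles the $W^r_\infty$ technicality that the paper leaves implicit.

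For \eqref{eq: virtue2} your ``cleaner route'' has a genuine gap. From $g(\alpha)=g(\beta)=0$ Rolle yields one zero of $g'$, but the cascade stops there: you cannot conclude that $g^{(k)}$ has a zero in $[\alpha,\beta]$ for every $k\le r-1$. The example $g(x)=x(1-x)$ on $[\alpha,\beta]=[0,1]$ with $r=3$ already kills the claim, since $g''\equiv -2$ has no zero and your bound would read $1/4=\|g\|_\infty\le \|g^{(3)}\|_\infty/3!=0$. The paper, like you, passes to a subinterval $[a^*,b^*]$ containing a point where $|g|\ge\eps$ and with $b^*-a^*\le\lambda_1(\{g\neq 0\})$, and then invokes \eqref{eq: virtue1} there; the point it leaves tacit, and which your Rolle argument does not supply, is why $g$ should have $r$ zeros on that subinterval. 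One clean way to close the gap within your contrapositive: if $\lambda_1(\{g\neq 0\})<L:=(r!\eps/M)^{1/r}$, pick $t_0$ with $|g(t_0)|\ge\eps$ and an interval $I\subseteq[a,b]$ of length $L'\in(\lambda_1(\{g\neq 0\}),L)$ containing $t_0$ (such $I$ exists because $L\le b-a$, which follows from \eqref{eq: virtue1} and the lemma's $r$-zero hypothesis); then $\lambda_1(\{g=0\}\cap I)\ge L'-\lambda_1(\{g\neq 0\})>0$, so $g$ has infinitely many, hence at least $r$, distinct zeros in $I$, and \eqref{eq: virtue1} on $I$ gives $\eps\le M(L')^r/r!<ML^r/r!=\eps$, the desired contradiction.
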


\begin{proof} 
If $p$ is the polynomial of degree less than $r$ that coincides
with $g$ at $r$ distinct points where $g$ is zero 
then $p=0$ 
% as well as 
and
$$
\Vert g-p \Vert_\infty \le 
\norm{g^{(r)}}_{\infty}\frac{(b-a)^{r}}{r!} .
$$
This proves \eqref{eq: virtue1}. 
Further, if $\norm{g}_\infty\geq \eps$ there 
is an interval $[a^*,b^*]\subseteq [a,b]$
with $a^*<b^*$ such that $\lambda_1(\{g\not = 0\}) 
\geq b^*-a^*$, $g\in  W^r_{\infty}[a^*,b^*]$ 
and $g(t)\geq \eps$ for some $t\in [a^*,b^*]$.
Thus, by \eqref{eq: virtue1} 
we have $\eps \leq (b^*-a^*)^r M/r! $ 
which implies \eqref{eq: virtue2}.
\end{proof}

Observe that if 
$f \in F^r_{M,d}$ and
at least one of the $f_i$ has at least $r$ distinct zeros,
then by Lemma~\ref{lem: poly_interpol}
$\Vert f \Vert_\infty \le \Vert f_i \Vert_\infty \le \frac{M}{r!}$
holds.
Assume now that $M$, $r$ and $\e$ are given with 
$M\leq r! \eps$.
Then there are only two cases: 
\begin{itemize}
\item
$\Vert f \Vert_\infty \le \frac{M}{r!} \leq \e$; 
in this case we can approximate $f$ by the zero function 
and this 
output is good enough, i.e., the error is bounded by $\e$. 
\item
All the sets $\{ x \in [0,1] \mid f_i(x)=0\}$ have 
less than $r$ elements and 
hence $\{ x \in [0,1]^d \mid f(x) = 0 \}$ has measure zero. 
\end{itemize} 

In the following we study randomized algorithms. For this
we denote by $(\Omega,\mathcal{F},\mathbb{P})$ the common probability space of all
considered random variables.

We consider the following randomized algorithm $S_{1,n}$.
\begin{algorithm}
Let $X$ be a uniformly distributed random variable in $[0,1]^d$.
Let $f\in F^r_{M,d}$ and $\omega \in \Omega$. Then $S_{1,n}$
works as follows:
 \begin{enumerate}
\item  
Set $x=X(\omega)$;\\
If $f(x) \not = 0$ then go to \ref{alg1_second_step};\\
Otherwise return $S_{1,n}(f)(\omega)=0$.

\item \label{alg1_second_step}
Run the algorithm of Lemma~\ref{lemma1} and return
$S_{1,n}(f)(\omega)=A_n(f)$.
% $A_n(f)$ of ,
\qed
\end{enumerate}
\end{algorithm}
This leads, by applying the error bound of Lemma~\ref{lemma1}, 
to the following result.

\begin{thm} \label{thm: simple_prob_bound}
Let $\eps>0$, $r\in \N$,   $M\in (0,r! \eps]$ 
and $n\geq d \max\{\eps^{-1/r} (d C_r M)^{1/r},2\}$. 
Then, for $f \in F^r_{M,d}$ we have
\[
\mathbb{P}(\norm{f-S_{1,n}(f)}_{\infty}\leq \eps) = 1.
\]
Hence 
\[
d \max\{\eps^{-1/r} (d C_r M)^{1/r},2\}+1 
\]
function values lead to an $\eps$ approximation with probability $1$.
\end{thm}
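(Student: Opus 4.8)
The plan is to fix an arbitrary $f=\bigotimes_{i=1}^d f_i\in F^r_{M,d}$ and show that $\norm{f-S_{1,n}(f)(\omega)}_\infty\le\eps$ for $\P$-almost every $\omega$, splitting according to whether some univariate factor $f_i$ has at least $r$ distinct zeros. I will use the standing assumption $\eps\le1$ of this section (so that $\eps^{-1/r}\ge1$), and I record two numerical facts. First, raising the hypothesis $n\ge d\,\eps^{-1/r}(dC_rM)^{1/r}$ to the $r$th power gives $n^r\ge d^{r+1}C_rM/\eps$, i.e.\ $C_r M d^{r+1}n^{-r}\le\eps$. Second, the hypothesis $n\ge d\max\{\eps^{-1/r}(dC_rM)^{1/r},2\}$ implies the hypothesis $n>d\max\{(dC_rM)^{1/r},2\}$ of Lemma~\ref{lemma1}. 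Consequently, whenever a point $z^*$ with $f(z^*)\ne0$ is available, step~\ref{alg1_second_step} of $S_{1,n}$ returns an $A_n(f)$ with $\norm{f-A_n(f)}_\infty\le C_r M d^{r+1}n^{-r}\le\eps$.

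\emph{First case: some $f_i$ has at least $r$ distinct zeros.} By the observation following Lemma~\ref{lem: poly_interpol} this forces $\norm{f}_\infty\le\norm{f_i}_\infty\le M/r!\le\eps$. Then $S_{1,n}$ is correct for \emph{every} $\omega$: if $f(X(\omega))=0$ it outputs $0$ and $\norm{f-0}_\infty=\norm{f}_\infty\le\eps$; if $f(X(\omega))\ne0$ it enters step~\ref{alg1_second_step} with $z^*=X(\omega)$ and, by the paragraph above, produces error at most $\eps$. Hence $\P(\norm{f-S_{1,n}(f)}_\infty\le\eps)=1$.

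\emph{Second case: every $f_i$ has fewer than $r$ zeros.} Then each set $\{t\in[0,1]\mid f_i(t)=0\}$ is finite, so $\{x\in[0,1]^d\mid f(x)=0\}=\bigcup_{i=1}^d\{x\mid f_i(x_i)=0\}$ is a finite union of $\lambda_d$-null sets and therefore has Lebesgue measure zero. Since $X$ is uniformly distributed on $[0,1]^d$, we get $\P(f(X)\ne0)=1$; on this full-measure event $S_{1,n}$ enters step~\ref{alg1_second_step} with the admissible base point $z^*=X(\omega)$, so the first paragraph gives $\norm{f-S_{1,n}(f)(\omega)}_\infty\le\eps$ for $\P$-almost every $\omega$, i.e.\ $\P(\norm{f-S_{1,n}(f)}_\infty\le\eps)=1$.

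Combining the two cases proves the probability-one bound for every $f\in F^r_{M,d}$, and the final count follows because $S_{1,n}$ spends one function value on $f(X(\omega))$ in step~1 and at most $n$ further values inside the algorithm of Lemma~\ref{lemma1}. I do not expect a real obstacle: the two substantive ingredients---the bound $\norm{f}_\infty\le M/r!$ when a factor has $r$ zeros, and the fact that $\{f=0\}$ is null otherwise---are precisely the dichotomy already established before the statement, and what remains is the elementary verification that the stated lower bound on $n$ simultaneously legitimizes Lemma~\ref{lemma1} and drives its error below $\eps$. The only delicate points are the strict-versus-nonstrict inequality in the threshold for $n$ and the use of $\eps\le1$.
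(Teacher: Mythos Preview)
Your proof is correct and follows exactly the paper's approach: the dichotomy ``some $f_i$ has $r$ zeros'' versus ``every $f_i$ has fewer than $r$ zeros'' is precisely the two-case discussion the paper sets up immediately before the theorem, and the paper then simply says that applying Lemma~\ref{lemma1} yields the result. Your write-up is in fact more careful than the paper's, since you explicitly verify that the assumed lower bound on $n$ both triggers Lemma~\ref{lemma1} and pushes its error below $\eps$, and you flag the strict-versus-nonstrict threshold issue that the paper glosses over.
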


We give a numerical example: 
Let $r=5$ and $M=10$ and $\eps = 1/10$. 
Then the problem is easy, see Theorem~\ref{thm: simple_prob_bound}. 
A single function evaluation 
is enough (with probability $1$) for the first step 
of the algorithm. 
For $r=5$ and $M=120 \cdot 32$ and all 
$\eps <1$ the problem is difficult, see 
Theorem~\ref{thm: curse} and Theorem~\ref{thm: curse2}.  \qed 

Now we assume that $r$ and $\eps \in (0,1)$ are given and $M$ satisfies 
$
M\in (\eps r!, 2^r r!). 
$
We will construct a randomized algorithm with polynomial 
in $d$ cost. 
The idea is to search randomly an $x$ such that $f(x) \not= 0$. 
But a simple uniform random search in $[0,1]^d$ does not work efficiently. 
In particular, if $M$ is close to $2^r r!$, it may happen that 
the set $\{ f(x) \not= 0 \}$ is very small for an 
$f \in F^r_{M,d}$   with 
$\Vert f \Vert_\infty > \eps$. 
Thus, the probability to find a non-zero can be exponentially small 
with respect to the dimension, such that this simple uniform random 
search does not lead to a good
algorithm.

The observation of the next lemma is useful to obtain 
a more sophisticated search strategy.
For this we define 
\begin{align*}
\delta^* & = \left( \frac{1}{2^{r+1}}+  \frac{r!}{2M} \right)^{1/r}-1/2 
\quad \hbox{and}    \\
d^* 	  & = \left \lceil 
\frac{\log \eps^{-1}}{\log(\frac{M}{2^{r+1}\,r!}+\frac{1}{2})^{-1}} 
\right \rceil
\end{align*}
and assume that $d\geq d^*$. 
By $\lambda_d$ we denote the $d$-dimensional Lebesgue measure and 
for $J\subset \N$ we write
$\abs{J}$ for the cardinality of $J$.

\begin{lem}  \label{lem: cruc_observ}
Let $M< 2^r r!$ and $\eps <1$. 
If $f\in F^r_{M,d}$ 
with $\norm{f}_{\infty} \geq \eps$
then at least $d-d^*$ of the functions $f_i$ satisfy 
$$
\lambda_1 (\{f_i =0\} \cap [1/2 - \delta^*, 1/2 + \delta^* ]) = 0.
$$ 
\end{lem}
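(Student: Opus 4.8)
The plan is to fix $f \in F^r_{M,d}$ with $\norm{f}_\infty \ge \eps$ and to control the index set
\[
J = \{\, i \in \{1,\dots,d\} : \lambda_1(\{f_i = 0\} \cap [1/2-\delta^*, 1/2+\delta^*]) > 0 \,\},
\]
since the assertion is exactly that $\abs{J} \le d^*$. First I would note that the quantity $q := \frac{M}{2^{r+1}\,r!} + \frac{1}{2}$ — the reciprocal of which has its logarithm in the denominator of $d^*$ — lies in $(1/2,1)$ precisely because $M < 2^r r!$.

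The core step is to show $\norm{f_i}_\infty \le q$ for each $i \in J$. Fix such an $i$. Since a zero set of positive Lebesgue measure is infinite, $f_i$ has $r$ distinct zeros inside $[1/2-\delta^*, 1/2+\delta^*] \cap [0,1]$, and the decisive point is that these $r$ zeros all lie in a window of length at most $2\delta^*$ (or at most $1$ when $\delta^* \ge 1/2$) around $1/2$. Hence, for any $x \in [0,1]$ one can pick a subinterval $I = I(x) \subseteq [0,1]$ that contains $x$ together with those $r$ zeros and has $\lambda_1(I) \le \frac{1}{2} + \delta^*$; here a small case distinction enters ($\delta^* \ge 1/2$, where $I = [0,1]$ works, versus $\delta^* < 1/2$, a one-line check on the endpoints $\frac12 \pm \delta^*$). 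Applying Lemma~\ref{lem: poly_interpol} to $f_i$ on $I$ and using $\norm{f_i^{(r)}}_\infty \le M$ gives
\[
\abs{f_i(x)} \le \norm{f_i}_{L_\infty(I)} \le \frac{M}{r!}\,\lambda_1(I)^r \le \frac{M}{r!}\Big(\tfrac{1}{2} + \delta^*\Big)^{r} = \frac{M}{r!}\Big(\frac{1}{2^{r+1}} + \frac{r!}{2M}\Big) = q ,
\]
the penultimate equality being just the definition of $\delta^*$. As $x$ was arbitrary, $\norm{f_i}_\infty \le q$.

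Finally I would multiply these bounds together: since $\norm{f_j}_\infty \le 1$ for every $j$ and $\norm{f_j}_\infty \le q$ for $j \in J$,
\[
\eps \le \norm{f}_\infty = \prod_{j=1}^{d} \norm{f_j}_\infty \le q^{\abs{J}} .
\]
Taking logarithms and dividing by $\log q^{-1} > 0$ gives $\abs{J} \le \log\eps^{-1} / \log q^{-1}$, and since $\abs{J}$ is an integer this forces $\abs{J} \le \lceil \log\eps^{-1} / \log q^{-1} \rceil = d^*$. Therefore at least $d - d^*$ of the indices $i$ satisfy $\lambda_1(\{f_i = 0\} \cap [1/2-\delta^*, 1/2+\delta^*]) = 0$, as claimed.

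I expect the only genuinely non-routine point to be recognising — and quantifying — that what matters is the \emph{localisation} of the zeros of $f_i$ in a short window around $1/2$, not merely that there are $r$ of them: knowing only that $f_i$ has $r$ zeros somewhere in $[0,1]$ yields the useless bound $\norm{f_i}_\infty \le M/r!$, which is $\ge 1$ whenever $M \ge r!$ and hence throughout the interesting range $M \uparrow 2^r r!$. The remaining ingredients — passing from positive measure to $r$ zeros, the explicit choice of $I(x)$, and the logarithmic counting — are routine, and the constants $\delta^*$ and $d^*$ are calibrated exactly so that the displayed chain ends at a base $q < 1$ and the multiplicative bound then closes.
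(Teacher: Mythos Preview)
Your proof is correct and follows essentially the same approach as the paper: bound $\norm{f_i}_\infty$ by $q=(1/2+\delta^*)^r M/r!$ for each ``bad'' index and multiply, then compare with $\eps$. In fact you have made explicit the subinterval argument (choosing $I(x)$ of length at most $1/2+\delta^*$ containing both $x$ and the $r$ zeros) that the paper leaves implicit when it invokes Lemma~\ref{lem: poly_interpol} to obtain the factor $(1/2+\delta^*)^r$ rather than $1$.
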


\begin{proof}
We prove the assertion by contraposition.
Assume that there is $J\subset \{1,\dots, d\}$ 
with $\abs{J}> d^*$ such that for all $i\in J$ 
the function $f_i$ has at least $r$ zeros. 
Then by Lemma~\ref{lem: poly_interpol}
\begin{equation*}   \label{E02} 
\Vert f_i \Vert_\infty \leq (1/2 + \delta^* )^r \frac{M}{r!}
\end{equation*} 
for all $i \in J$. 
Because of $M< 2^r r!$, the choice of $\delta^*$ and 
the choice of $d^*$ we have
$
\left[(1/2 + \delta^* )^r \frac{M}{r!}\right]^{d^*} < \eps,
$
which finally leads to $\norm{f}_\infty < \eps$.
\end{proof}
This 
motivates the next algorithm 
denoted by $S_{n_1,n_2}$.
\begin{algorithm} \label{alg: M_klein}
 Let 
 \[
  K_{d^*} = \left\{ J\subseteq \{1,\dots,d\} \mid \abs{J}=d^* \right\}
 \]
 be the set of the coordinate sets of cardinality $d^*$ and let
 $Y=(Y_i)_{1\leq i\leq n_1}$ be an i.i.d. 
 sequence of uniformly distributed random variables in $K_{d^*}$.
 Independent of $Y$ let $Z=(Z_i)_{1\leq j \leq dn_1}$ be an i.i.d. sequence
 of uniformly distributed random variables in $[0,1]$. Further, note  that 
 \[
  s(Z_i) = \frac{1}{2}+ \delta^*(2Z_i-1)
 \]
 has uniform distribution in $[1/2-\delta^*,1/2+\delta^*]$.
 Then for $f\in F_{M,d}^r$ and $\omega \in \Omega$ the algorithm 
 $S_{n_1,n_2}$ works as follows:
\begin{enumerate} 
\item
For $1\leq i \leq n_1$ do\\[0.5ex] 
\hspace*{1ex} Set $I=Y_i(\omega)$;\\
\hspace*{1ex} For $1\leq j \leq d$ do\\[0.5ex] 
% \begin{enumerate}
\hspace*{4ex} If $j\in I$ then set $x_j=Z_{j+d(i-1)}(\omega)$. 
	      Otherwise set $x_j=s(Z_{j+d(i-1)}(\omega))$;\\[0.5ex]
% choose $x_j \in [1/2-\delta^*,1/2+\delta^*]$
% uniformly distributed.
\hspace*{1ex} If $f(x_1,\dots,x_d) \not = 0$ then go to \ref{it: second};\\ 
\hspace*{1ex} If $i=n_1$ and we did not find
$f(x)\not=0$ then return $S_{n_1,n_2}(f)(\omega)=0$.
\item \label{it: second}
Run the algorithm of Lemma~\ref{lemma1} and 
return $S_{n_1,n_2}(f)(\omega)=A_{n_2}(f)$.\qed
\end{enumerate} 
\end{algorithm}
Roughly the algorithm
chooses uniformly a coordinate set $I$ of cardinality $d^*$. If $j\in I$
then $x_j \in[0,1]$ is uniformly distributed otherwise $x_j$ is chosen uniformly
distributed in $[1/2-\delta^*,1/2+\delta^*]$. 
Then we check whether $f(x_1,\dots,x_d) \not = 0$. 
If this is the case
we apply $A_{n_2}$ from  Lemma~\ref{lemma1}.

We obtain the following error bound for this algorithm. 

\begin{lem}    \label{lem: simple_prob_bound2}
Let $\eps>0$, $r\in \N$, $M\in (r!\eps ,2^r r!)$ and 
$n_2\geq d \max\{\eps^{-1/r} (d C_r M)^{1/r},2\}$. 
Further let
\[
\alpha_{r,\eps,M} = 1+\frac{2^{r+1} r! \log\eps^{-1}}{(2^r r! -M)}
\quad \mbox{and} \quad
C_{r,\eps,M} = \left(\frac{3^r M}{ r!\eps} 
\right)^{\frac{\alpha_{r,\eps,M}}{r}} . 
\]
Then,
for $f \in F^r_{M,d}$ holds
\[
\mathbb{P}(\norm{f-S_{n_1,n_2}(f)}_{\infty}\leq \eps)
\geq 
1 - \left(1-\frac{d^{-\alpha_{r,\eps,M}}}{C_{r,\eps,M}}\right)^{n_1} . 
\]
\end{lem}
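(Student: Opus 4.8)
The plan is to reduce the bound to an estimate of the success probability of a single pass of the search loop in Algorithm~\ref{alg: M_klein} and then to exploit the independence of the $n_1$ passes. First I would treat the trivial case $\norm{f}_\infty\le\eps$: if the loop never finds a point with $f\neq0$ the output is $0$ and $\norm{f-0}_\infty\le\eps$, while if such a point is found the algorithm runs $A_{n_2}$ from Lemma~\ref{lemma1}, and the choice $n_2\ge d\max\{\eps^{-1/r}(dC_rM)^{1/r},2\}$ gives $C_rMd^{r+1}n_2^{-r}\le\eps$; so in this case the stated probability equals $1$. Hence assume $\norm{f}_\infty>\eps$ from now on (and recall $d\ge d^*$, which is needed for the algorithm and for Lemma~\ref{lem: cruc_observ}).

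If $\norm{f}_\infty>\eps$ then, since $\abs{f(x)}=\prod_k\abs{f_k(x_k)}\le\abs{f_i(x_i)}$ by $\norm{f_k}_\infty\le1$, each factor satisfies $\norm{f_i}_\infty\ge\norm{f}_\infty>\eps$; together with $\norm{f_i^{(r)}}_\infty\le M$, inequality~\eqref{eq: virtue2} of Lemma~\ref{lem: poly_interpol} yields $\lambda_1(\{f_i\neq0\})\ge(r!\eps/M)^{1/r}$ for every $i$. By Lemma~\ref{lem: cruc_observ} the ``bad'' set $B=\{i:\lambda_1(\{f_i=0\}\cap[1/2-\delta^*,1/2+\delta^*])>0\}$ has $\abs{B}\le d^*$, so we may fix a coordinate set $I_0$ with $B\subseteq I_0$ and $\abs{I_0}=d^*$. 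On the event $\{Y_i=I_0\}$, which has probability $1/\binom{d}{d^*}$, the coordinates of the $i$th test point are independent, $x_j$ uniform on $[0,1]$ for $j\in I_0$ and uniform on $[1/2-\delta^*,1/2+\delta^*]$ for $j\notin I_0$. For $j\notin I_0$ we have $j\notin B$, hence $\P(f_j(x_j)=0)=0$; for $j\in I_0$ we have $\P(f_j(x_j)\neq0)=\lambda_1(\{f_j\neq0\})\ge(r!\eps/M)^{1/r}$. Multiplying over all $d$ coordinates, a single pass finds a point with $f\neq0$ with probability at least
\[
p:=\binom{d}{d^*}^{-1}\left(\frac{r!\eps}{M}\right)^{d^*/r}.
\]

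It then remains to check $p\ge C_{r,\eps,M}^{-1}d^{-\alpha_{r,\eps,M}}$. Using $\binom{d}{d^*}\le(ed/d^*)^{d^*}\le(3d)^{d^*}$ (legitimate since $d^*\ge1$ and $e<3$) one rewrites $p\ge d^{-d^*}(3^rM/(r!\eps))^{-d^*/r}$. The key elementary step is $d^*\le\alpha_{r,\eps,M}$: with $u=\tfrac{M}{2^{r+1}r!}+\tfrac12\in(\tfrac12,1)$ one has $d^*\le1+\log\eps^{-1}/(-\log u)$, and the standard inequality $\log u\le u-1$ gives $-\log u\ge1-u=(2^rr!-M)/(2^{r+1}r!)$, whence $d^*\le1+\frac{2^{r+1}r!\log\eps^{-1}}{2^rr!-M}=\alpha_{r,\eps,M}$. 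Since $d\ge1$ and $3^rM/(r!\eps)>1$, both $d^{-d^*}\ge d^{-\alpha_{r,\eps,M}}$ and $(3^rM/(r!\eps))^{-d^*/r}\ge(3^rM/(r!\eps))^{-\alpha_{r,\eps,M}/r}$, so $p\ge d^{-\alpha_{r,\eps,M}}(3^rM/(r!\eps))^{-\alpha_{r,\eps,M}/r}=C_{r,\eps,M}^{-1}d^{-\alpha_{r,\eps,M}}$.

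Finally, the $n_1$ passes are driven by the distinct variables $Y_i$ and disjoint blocks $Z_{1+d(i-1)},\dots,Z_{d+d(i-1)}$, so the events that pass $i$ produces a point with $f\neq0$ are independent, each of probability at least $p$. Therefore the probability that the loop exhausts all $n_1$ passes without success is at most $(1-p)^{n_1}\le(1-C_{r,\eps,M}^{-1}d^{-\alpha_{r,\eps,M}})^{n_1}$, and on the complementary event the second phase delivers an $\eps$-approximation as above, which gives the claimed bound. The main obstacle is the elementary inequality $d^*\le\alpha_{r,\eps,M}$ --- recognising that the definitions of $d^*$ and $\alpha_{r,\eps,M}$ are precisely matched through $\log u\le u-1$ --- together with the bookkeeping that passes from the conditional success probability on $\{Y_i=I_0\}$ to the unconditional one and then to the product over the $n_1$ independent passes.
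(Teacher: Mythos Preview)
Your proof is correct and follows essentially the same route as the paper: reduce to $\norm{f}_\infty>\eps$, use Lemma~\ref{lem: poly_interpol} to get $\lambda_1(\{f_i\neq0\})\ge(r!\eps/M)^{1/r}$, use Lemma~\ref{lem: cruc_observ} to identify a good coordinate set $I_0$, bound the single-pass success probability by $\binom{d}{d^*}^{-1}(r!\eps/M)^{d^*/r}$, and then pass from $d^*$ to $\alpha_{r,\eps,M}$ via $\log u\le u-1$ (the paper writes this as $1-y<\log y^{-1}$). The only cosmetic difference is that the paper keeps the factor $d^*$ in the binomial bound $\binom{d}{d^*}\le(3d/d^*)^{d^*}$ a line longer before dropping it via $d^*\ge1$, whereas you absorb it into $e<3$ immediately; the resulting estimate is the same.
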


\begin{proof}
We assume that $\norm{f}_{\infty}\geq \eps$, otherwise the zero output is fine.
Then, by \eqref{eq: virtue2}
we have for any
$f_i$ that $\lambda_1(\{f_i \not = 0\}) 
\geq \left(\frac{r!\eps }{M}\right)^{1/r}$.
Let us denote the probability that we found $f(x)\not =0 $ 
in a single iteration 
of the first step of the algorithm $S_{n_1,n_2}$ by $\theta$. 
Further note that for every $1\leq i \leq n_1$ there are 
$\abs{K_{d^*}}=\binom{d}{d^*}$
many choices of the $d^*$ different coordinates in $I$.
Thus, by $\binom{d}{d^*} 
% \leq \frac{d^{d^*}}{d^*!} 
\leq \left(\frac{3d}{d^*}\right)^{d^*}$,
Lemma~\ref{lem: cruc_observ} and the fact that 
$\lambda_1(\{f_i \not = 0\})\geq 
\left(\frac{r!\eps }{M}\right)^{1/r}$ for any $i\in \{1,\dots,d\}$ it follows
\begin{align*}
\theta \geq \frac{\left(\frac{r! \eps}{M} \right)^{d^*/r}}{\binom{d}{d^*}}
\geq \left[\left(\frac{r! \eps}{M} \right)^{1/r} \frac{d^*}{3d} \right]^{d^*}
.
\end{align*}
Further by $1-y < \log y^{-1}$ 
for $y\in(0,1)$ we obtain
$
1 \leq d^* 
% \leq 1+\frac{2^{r+1} r! \log\eps^{-1} }{(2^r r! - M)}
\leq \alpha_{r,\eps,M}
.
$
Now by the choice of $n_2$, Lemma~\ref{lemma1} and the previous consideration
it follows that
\begin{align*}
   \mathbb{P}(\norm{f-S_{n_1,n_2}(f)}_{\infty}\leq \eps) 
 = 1-(1-\theta)^{n_1} 
\geq 1 - \left[1-\left(\frac{1}{3d} \left(\frac{r! \eps}{M}\right)^{1/r} 
\right)^{
% 1+\frac{2^{r+1} r! \log\eps^{-1} }{(2^r r! - M)}
\alpha_{r,\eps,M}
}
\right]^{n_1}.
\end{align*}
\end{proof}

\begin{thm} \label{thm: simple_prob_bound2}
Let $\eps>0$, $r\in \N$, $M\in (r!\eps ,2^r r!)$
and $0<p<1$. 
Then, with Algorithm~\ref{alg: M_klein} denoted by $S_{n_1,n_2}$, the constants
$C_{r,\eps,M}$, $\alpha_{r,\eps,M}$ of Lemma~\ref{lem: simple_prob_bound2} and 
\[
C_{r,\eps,M} \cdot d^{\,\alpha_{r,\eps,M}} \log p^{-1} 
+ d \max\{\eps^{-1/r} (d C_r M)^{1/r},2\}
\]
function values we obtain 
for $f \in F^r_{M,d}$ 
an $\eps$ approximation with probability $1-p$.
\end{thm}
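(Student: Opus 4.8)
## Proof Proposal

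The plan is to combine the error bound of Lemma~\ref{lem: simple_prob_bound2} with an appropriate choice of the iteration count $n_1$. Starting from the probability estimate in Lemma~\ref{lem: simple_prob_bound2}, namely
\[
\mathbb{P}(\norm{f-S_{n_1,n_2}(f)}_{\infty}\leq \eps)
\geq 1 - \left(1-\frac{d^{-\alpha_{r,\eps,M}}}{C_{r,\eps,M}}\right)^{n_1},
\]
I want the right-hand side to be at least $1-p$. This reduces to requiring $\left(1-\frac{d^{-\alpha_{r,\eps,M}}}{C_{r,\eps,M}}\right)^{n_1} \leq p$.

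First I would take logarithms and use the elementary inequality $\log(1-y)^{-1} \geq y$ for $y \in (0,1)$ (equivalently $1-y \leq \eu^{-y}$), applied with $y = d^{-\alpha_{r,\eps,M}}/C_{r,\eps,M}$, which lies in $(0,1)$ since $C_{r,\eps,M} \geq 1$ and $d \geq 1$. This gives
\[
\left(1-\frac{d^{-\alpha_{r,\eps,M}}}{C_{r,\eps,M}}\right)^{n_1} \leq \exp\!\left(-\frac{n_1\, d^{-\alpha_{r,\eps,M}}}{C_{r,\eps,M}}\right).
\]
Hence it suffices to choose $n_1$ so that $\frac{n_1\, d^{-\alpha_{r,\eps,M}}}{C_{r,\eps,M}} \geq \log p^{-1}$, i.e.
\[
n_1 \geq C_{r,\eps,M}\cdot d^{\,\alpha_{r,\eps,M}} \log p^{-1}.
\]
So taking $n_1 = \lceil C_{r,\eps,M}\cdot d^{\,\alpha_{r,\eps,M}} \log p^{-1}\rceil$ and $n_2 = \lceil d \max\{\eps^{-1/r}(dC_rM)^{1/r},2\}\rceil$ yields the desired success probability $1-p$.

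The last step is to count function values. The algorithm $S_{n_1,n_2}$ uses at most one evaluation of $f$ per iteration of the first loop (each iteration assembles a single point $(x_1,\dots,x_d)$ and checks whether $f$ is nonzero there), so at most $n_1$ values in phase one; if phase two is entered it additionally uses the $n_2$ values of the algorithm from Lemma~\ref{lemma1}. Thus the total is at most $n_1 + n_2$, which is bounded by
\[
C_{r,\eps,M}\cdot d^{\,\alpha_{r,\eps,M}} \log p^{-1} + d \max\{\eps^{-1/r}(dC_rM)^{1/r},2\}
\]
up to the harmless ceilings. Since $\alpha_{r,\eps,M}$ and $C_{r,\eps,M}$ depend only on $r,\eps,M$ (not on $d$), this is polynomial in $d$ for each fixed $\eps$ and $p$. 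I do not expect any serious obstacle here: the only points requiring care are checking $d^{-\alpha_{r,\eps,M}}/C_{r,\eps,M} \in (0,1)$ so that the logarithmic inequality applies, and confirming the per-iteration count of exactly one function value in the first phase of Algorithm~\ref{alg: M_klein}.
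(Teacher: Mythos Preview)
Your proposal is correct and follows exactly the approach the paper intends: the paper's proof is the single sentence ``The result is an immediate consequence of Lemma~\ref{lem: simple_prob_bound2},'' and you have simply filled in the standard details of that consequence (the inequality $1-y\le \eu^{-y}$ to solve for $n_1$, and the count $n_1+n_2$ of function values). Nothing is missing or different.
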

\begin{proof}
The result is an immediate consequence
of Lemma~\ref{lem: simple_prob_bound2}.
\end{proof}

Observe
that for any fixed $r\in \N$, $\eps\in (0,1)$ and $M\in (0,2^r r!)$ 
the number of function values which lead to an $\eps$ approximation 
is polynomial in the dimension.

\section{Tractability for a modified class of functions}   \label{sec3} 

For large $M$ we have seen that there is the curse of 
dimensionality for the classes $F^r_{M,d}$. 
For $f\in F^r_{M,d}$ with $f= \bigotimes_{i=1}^d f_i$
it can be difficult to find a point where the function 
is not zero even if $\Vert f \Vert_\infty$ is large.
If we assume that
every $f_i$ is not zero on an interval with Lebesgue measure $\alpha_i\in [0,1]$ 
we have
\[
 \lambda_d(\{ f\not= 0\}) \ge \prod_{i=1}^d \alpha_i.
\]
The lower bound from Theorem~\ref{thm: curse} and Theorem~\ref{thm: curse2} 
stems from the fact 
that $\alpha_i= 1/2$ is possible for each $i$ 
and we obtain $\prod_{i=1}^d \alpha_i = 2^{-d}$, 
i.e., this volume is exponentially small. 
We admit that it is possible that all $\alpha_i$ are small and then we 
obtain the curse of dimensionality as described. 
In other applications it might happen that only a few of the $\alpha_i$ are 
small and then we can avoid the curse. 

This motivates to study 
a class of functions $F^{r,V}_{M,d}$ 
with large support, 
we assume 
that the numbers $\alpha_i$ are sufficiently large.
We denote by 
\[
R=\{ \Pi_{i=1}^d [a_i,b_i] 
\subseteq [0,1]^d \mid a_i, b_i 
\in[0,1],\,a_i\leq b_i, \; i=1,\dots,d  \}
\]
the set of all boxes in $[0,1]^d$, here 
$\lambda_d(A)$ is 
the Lebesgue measure of $A\subset \R^d$.
Then, let
\begin{equation*} 
F^{r,V}_{M,d} = \{ f \in F^r_{M,d}    \mid 
\exists A \in R\; \mbox{with}\; \lambda_d(A)>V\; 
\mbox{and}\; f(x) \not= 0,\;\forall x\in A\}.
\end{equation*}

The basic strategy for the approximation of $f\in F^{r,V}_{M,d}$
is to find a $z^*\in [0,1]^d$ with $f(z^*)\not = 0$ 
and after that apply Lemma~\ref{lemma1}.

For finding $z^*$ the following definition is useful to measure the
quality of a point set.
Let
\[
{\rm disp}(x_1,\dots,x_n) = 
\sup_{A \in R,\; A\cap \{x_1,\dots,x_n\} = \emptyset } \lambda_d(A)
\]
be the dispersion of the set $\{x_1,\dots,x_n\}$. 
The dispersion of a set is 
the largest volume of a box which does not 
contain any point of the set.
By $n^{\rm disp}(V,d)$ we denote the smallest number of points 
needed to have at least one point 
in every box with volume $V$, i.e.
\[
n^{\rm disp}(V,d) = \inf\{ n\in\N 
\mid \exists\, x_1,\dots,x_n \in [0,1]^d\; 
\mbox{with} \; {\rm disp}(x_1,\dots,x_n) \leq V\}.
\]

The authors of \cite{BDDG} consider as a point set 
the Halton sequence and 
use the following result
of \cite{DJ09,RT96} proved with this sequence. 
% \begin{prop}
Let $p_1,\dots,p_d$ be the first $d$ prime numbers then
\begin{equation*} 
n^{\rm disp}(V,d) \le \frac{2^d \prod_{i=1}^d p_i}{V} .
\end{equation*} 
The nice thing
is the dependence on
$V^{-1}$ which is of course 
optimal, already for $d=1$. 
The involved constant is, however, super-exponential 
in the dimension,
even for a point set with $2^d \prod_{i=1}^d p_i$ elements one only 
obtains the trivial bound $1$ of the dispersion. 

The quantity $n^{\rm disp}(V,d)$ is 
well studied.
The following result is due to Blumer, Ehrenfeucht, 
Haussler and Warmuth, see \cite[Lemma~A2.4]{BlEhHaWa89}. 
For this note that the test set of boxes 
has Vapnik-Chervonenkis dimension $2d$. 
Recall that by $(\Omega,\mathcal{F},\P)$ we denote the common probability space
of all considered random variables.

\begin{prop}   \label{prop: disp_bound}  
Let $(X_i)_{1 \leq i \leq n}$ be an i.i.d. sequence of uniformly distributed 
random variables mapping in $[0,1]^d$.
Then for any $0<V<1$ and $n \in \N$ 
\[
\P( {\rm disp}(X_1,\dots,X_n ) \leq V ) 
\geq 1-\left({e}\, n/d\right)^{2d} 2^{-V n/2} . 
\]
Thus
\begin{equation*} 
n^{\rm disp}(V,d) \le 16 d V^{-1} \log_2 (13 V^{-1}).
\end{equation*} 
\end{prop}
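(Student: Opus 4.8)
The probabilistic half of the statement is, up to absolute constants, exactly the Vapnik--Chervonenkis $\eps$-net theorem for the range space of boxes, so my plan is to invoke \cite[Lemma~A2.4]{BlEhHaWa89} (and, for completeness, to sketch why it holds). The combinatorial input is that the boxes in $R$ form a range space of VC dimension $2d$, so by the Sauer--Shelah lemma the number of distinct traces of boxes on any $m$-point subset of $[0,1]^d$ is at most $\sum_{i=0}^{2d}\binom{m}{i}\le (em/(2d))^{2d}$, which for $m=2n$ equals $(en/d)^{2d}$ --- precisely the combinatorial factor appearing in the claim.

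To turn this into a tail bound I would run the standard double-sampling (symmetrisation) argument. Let $B$ be the event that some box $A\in R$ with $\lambda_d(A)>V$ contains none of $X_1,\dots,X_n$, and draw an independent ghost sample $X_{n+1},\dots,X_{2n}$. On $B$, fixing a witnessing box $A$, the number of ghost points in $A$ is binomial with mean $n\lambda_d(A)>Vn$, hence at least $Vn/2$ with probability at least $\tfrac{1}{2}$ by a Chebyshev (or Chernoff) estimate; so the event $B'$ that some box of volume $>V$ simultaneously misses the real sample and catches at least $Vn/2$ ghost points satisfies $\P(B')\ge\tfrac{1}{2}\P(B)$. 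Conditioning on the unordered $2n$-point multiset and re-randomising which $n$ of them are ``real'', for any fixed combinatorial box holding $k\ge Vn/2$ of the $2n$ points the chance that all $k$ fall into the ghost half and none into the real half is at most $2^{-k}\le 2^{-Vn/2}$; a union bound over the at most $(en/d)^{2d}$ combinatorial boxes then gives $\P(B')\le (en/d)^{2d}2^{-Vn/2}$, hence $\P(B)\le 2(en/d)^{2d}2^{-Vn/2}$. Since $B^{\mathrm c}$ forces every empty box to have volume at most $V$, i.e.\ $B^{\mathrm c}\subseteq\{{\rm disp}(X_1,\dots,X_n)\le V\}$, this already yields the asserted inequality up to the absolute constant $2$; shaving that constant is exactly the sharper bookkeeping carried out in \cite[Lemma~A2.4]{BlEhHaWa89}, which I would simply quote for the clean form.

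For the dispersion bound I would use the probabilistic method: if $n$ is large enough that $(en/d)^{2d}2^{-Vn/2}<1$, then $\P({\rm disp}(X_1,\dots,X_n)\le V)>0$, so a deterministic $n$-point configuration with dispersion at most $V$ exists and $n^{\rm disp}(V,d)\le n$. The inequality $(en/d)^{2d}2^{-Vn/2}<1$ rearranges to $n>\tfrac{4d}{V}\log_2(en/d)$, so it remains to check that $n=\big\lceil 16\,d\,V^{-1}\log_2(13\,V^{-1})\big\rceil$ satisfies this for every $0<V<1$. I expect this last verification --- an elementary but slightly delicate estimate in which the constant $13$ is tuned precisely so that the $\log_2$ terms on the two sides line up, using $V<1$ to control $\log_2(n/d)$ --- to be the only genuine computation: the probabilistic inequality is the classical $\eps$-net theorem, and everything else is bookkeeping.
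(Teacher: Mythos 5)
Your proposal is correct and follows essentially the same route as the paper, which in fact offers no proof beyond citing \cite[Lemma~A2.4]{BlEhHaWa89} together with the remark that boxes have Vapnik--Chervonenkis dimension $2d$: your double-sampling sketch is the standard proof of that lemma, and your final computation does verify the constant, since the required inequality $n>\frac{4d}{V}\log_2(en/d)$ at $n=16dV^{-1}\log_2(13V^{-1})$ reduces to $u^3>\frac{16e}{13}\log_2 u$ for $u=13V^{-1}\ge 13$, which holds with huge room to spare. The only caveats, both harmless for the dispersion bound, are the factor $2$ you already flag and the fact that the Sauer--Shelah estimate $(em/D)^{D}$ presupposes $m\ge D$, i.e.\ $n\ge d$ here (for $n<d$ the displayed probability bound can actually fail, but that regime is irrelevant to the second assertion).
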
 
This shows that the number of function values needed
to find $z^*$ with $f(z^*)\not= 0$ depends only 
linearly on the dimension $d$.
Proposition~\ref{prop: disp_bound}
leads to the following theorem.

\begin{thm}   \label{theorem4} 
Let $r \in \N$, $M\in(0,\infty)$ and $\eps,V \in (0,1)$. 
Then
\begin{equation*} 
n^{\rm det}(\e, F^{r,V}_{M,d} ) \le 
16 \,d \, V^{-1} \log_2(13 V^{-1}) 
+ d \max\{\eps^{-1/r}(d C_r M)^{1/r},2\}   , 
% d\,[\, (3d M  \,C(r))^{1/r} \eps^{-1/r} + 16V^{-1} \log_2 (13 V^{-1}) +1\,], 
% C(r) d   (\e^{-1/r} M^{1/r} + 8V^{-1} \log (13 V^{-1})).
\end{equation*} 
where $C_r$ 
comes from Lemma~\ref{lemma1} and
does not depend on $d,V, M$ and $\eps$.
\end{thm}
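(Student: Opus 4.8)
The plan is to combine the two-phase strategy that has already been set up: first use a deterministic low-dispersion point set to locate a point $z^*$ where $f$ does not vanish, then invoke the constructive algorithm of Lemma~\ref{lemma1} to approximate $f$ from $z^*$. Concretely, I would proceed as follows. Fix $f \in F^{r,V}_{M,d}$. By definition of the class there is a box $A \in R$ with $\lambda_d(A) > V$ on which $f$ is nowhere zero. Let $m = n^{\rm disp}(V,d)$ and take a point set $x_1,\dots,x_m \in [0,1]^d$ with ${\rm disp}(x_1,\dots,x_m) \le V$; such a set exists by the definition of $n^{\rm disp}(V,d)$. Since the dispersion is at most $V < \lambda_d(A)$, the box $A$ must contain at least one of the points, say $x_j$; set $z^* = x_j$. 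Then $f(z^*) \neq 0$, so the hypotheses of Lemma~\ref{lemma1} are met.

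In the second phase I would run the algorithm $A_{n_2}$ from Lemma~\ref{lemma1} with $z^*$ as the required nonzero point, using $n_2 = \bigl\lceil d \max\{\eps^{-1/r}(d C_r M)^{1/r},2\}\bigr\rceil$ function evaluations (slightly more if one wants a clean integer; the displayed bound is stated without the ceiling). Lemma~\ref{lemma1} requires $n_2 > d \max\{(d C_r M)^{1/r},2\}$, and since $\eps < 1$ we have $\eps^{-1/r} \ge 1$, so this is satisfied; it then guarantees $\norm{f - A_{n_2}(f)}_\infty \le C_r M d^{r+1} n_2^{-r}$. By the choice of $n_2$ we have $n_2^{-r} \le \eps\, d^{-r-1} (C_r M)^{-1}$ when $n_2 = d\,\eps^{-1/r}(dC_rM)^{1/r}$ is the active branch, hence $\norm{f - A_{n_2}(f)}_\infty \le \eps$. (If the branch $n_2 = 2d$ is active, the same bound holds because that branch is only active when it already dominates the other.) Finally, bound $m = n^{\rm disp}(V,d)$ using Proposition~\ref{prop: disp_bound}, which gives $m \le 16\, d\, V^{-1}\log_2(13 V^{-1})$. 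Adding the two phases yields the claimed total
\[
n^{\rm det}(\e, F^{r,V}_{M,d}) \le 16\, d\, V^{-1}\log_2(13 V^{-1}) + d \max\{\eps^{-1/r}(d C_r M)^{1/r},2\}.
\]

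The only genuinely delicate points are (i) making sure the algorithm is fully deterministic and nonadaptive in the first phase — this is fine because the low-dispersion point set is fixed in advance and depends only on $V$ and $d$, not on $f$ — and (ii) checking that the second-phase error bound from Lemma~\ref{lemma1} really does drop below $\eps$ for the stated $n_2$, which is just the routine verification above that $C_r M d^{r+1} n_2^{-r} \le \eps$. I do not expect any real obstacle; the theorem is essentially an assembly of Lemma~\ref{lemma1} with the dispersion estimate of Proposition~\ref{prop: disp_bound}, and the small caveat worth flagging is that ${\rm disp}(x_1,\dots,x_m) \le V$ together with $\lambda_d(A) > V$ gives the strict containment of a sample point in $A$ — one needs the strict inequality $\lambda_d(A) > V$ in the definition of $F^{r,V}_{M,d}$ precisely for this step.
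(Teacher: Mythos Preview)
Your proposal is correct and follows essentially the same approach as the paper: use Proposition~\ref{prop: disp_bound} to obtain a deterministic point set of dispersion at most $V$, which guarantees a hit in the box $A$ with $\lambda_d(A)>V$ where $f$ is nonzero, and then apply Lemma~\ref{lemma1} with the resulting $z^*$; adding the two phase costs gives the stated bound. Your added remarks on the strict inequality $\lambda_d(A)>V$ and on verifying $C_r M d^{r+1} n_2^{-r}\le\eps$ are accurate and make explicit what the paper leaves implicit.
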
 
 \begin{proof} 
If we have a point set with dispersion smaller than $V$, 
we know that every box with Lebesgue measure at least $V$ contains a point.
By Proposition~\ref{prop: disp_bound} we know there is such a point set 
with cardinality at most $16 \,d \, V^{-1} \log_2(13 V^{-1}) $.
By computing $f(x)$ for each $x$ of the point set we find a non-zero, since
$f\in F^{r,V}_{M,d}$. Then, by Lemma~\ref{lemma1} we need 
$d \max\{ \eps^{-1/r}(C_r d M )^{1/r},2\}$ 
more function values for an $\eps$ approximation.
By adding the number of function values we obtain the assertion.
\end{proof}

Therefore 
the information complexity of the problem in the deterministic setting
is at most quadratic in the dimension, in particular, 
the problem is polynomially tractable in the worst case setting.

Theorem~\ref{theorem4} has a drawback 
since it is only a result on the \emph{existence} 
of a fast algorithm. 
It is based on 
Proposition~\ref{prop: disp_bound}, 
which tells us that a uniformly distributed random point set satisfies the
bound on $n^{\rm disp}(V,d)$ with high probability.
As far as we know, an explicit construction of such 
point sets is not known. 

Because of this, 
we also present a randomized algorithm
$S_{n_1,n_2}$ which consists of
two steps. 
Here $n_1\in \N$ indicates 
the number of function evaluations for the first step 
and $n_2\in\N$ the function evaluations
for the second one. 
\begin{algorithm} \label{alg: new_class_ran}
Let $(X_i)_{1\leq i\leq n_1}$ be an i.i.d. sequence of uniformly distributed
random variables in $[0,1]^d$. For $f\in F^{r,V}_{M,d}$
and $\omega\in \Omega$ the method $S_{n_1,n_2}$ works as follows:
\begin{enumerate}
\item 
For $1\leq i \leq n_1$ do\\[0.5ex]
\hspace*{1ex} Set $x_i=X_i(\omega)$;\\
\hspace*{1ex} If $f(x_i)\not = 0$, then set $z^*=x_i$ and go to \ref{it: V_second};\\
\hspace*{1ex} If $i=n_1$ and we did not find 
$z^*$ with $f(z^*)\not =0$ then
return $S_{n_1,n_2}(f)(\omega)=0$.
\item \label{it: V_second}
Run the algorithm of Lemma~\ref{lemma1} and 
return $S_{n_1,n_2}(f)(\omega)=A_{n_2}(f)$.\qed
\end{enumerate}
\end{algorithm}

This randomized algorithm has typical advantages and disadvantages
compared to deterministic algorithms: 

\begin{itemize} 
\item
The advantage is that the randomized algorithm is even faster.
For the first phase of the algorithm 
(search of an $z^*$ such that $f(z^*) \not= 0$) 
the number of roughly $d V^{-1} \log (V^{-1})$ function evaluations
is replaced by roughly $V^{-1}$. 
\item
The disadvantage is that this algorithm can output a wrong result, 
even if this probability can be made arbitrarily small. 
\end{itemize}

We have the following error bound. 

\begin{thm} \label{thm: simple_prob_bound3}
Let $n_2\in \N$ with $n_2 \geq d \max\{ \eps^{-1/r}(C_r d M )^{1/r},2\}$.
Then
\begin{align*}  \label{eq: forall_outside}
    \P(\norm{f-S_{n_1,n_2}(f)}_{\infty} \leq \eps) 
%     & = \P(T \leq n_1) = \sum_{i=1}^{n_1} \P(T=i) \\
    & 
%     \leq \sum_{i=1}^{n_1} (1-V)^{i-1} V 
\geq 1-(1-V)^{n_1} 
\end{align*}
for $f\in F^{r,V}_{M,d}$ and $n_1\in \N$.
\end{thm}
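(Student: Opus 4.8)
The plan is to condition on the event that the first phase succeeds, i.e.\ that at least one of the sample points $X_1,\dots,X_{n_1}$ lands in a box on which $f$ is nonzero. First I would fix $f\in F^{r,V}_{M,d}$ and invoke the definition of the class: there exists a box $A\in R$ with $\lambda_d(A)>V$ and $f(x)\neq 0$ for all $x\in A$. Since each $X_i$ is uniformly distributed on $[0,1]^d$, we have $\P(X_i\in A)=\lambda_d(A)>V$, and by independence $\P(X_i\notin A\text{ for all }1\le i\le n_1)=(1-\lambda_d(A))^{n_1}<(1-V)^{n_1}$. Hence with probability at least $1-(1-V)^{n_1}$ there is an index $i$ with $X_i\in A$, so that $f(X_i)\neq 0$ and the algorithm enters the second phase with a valid $z^*$.

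Next I would observe that on this event the algorithm returns $A_{n_2}(f)$, and since $n_2\ge d\max\{\eps^{-1/r}(C_r d M)^{1/r},2\}$, Lemma~\ref{lemma1} applies directly (with the known nonzero point $z^*=X_i$) and guarantees $\norm{f-A_{n_2}(f)}_\infty\le C_r M d^{r+1} n_2^{-r}$. One then checks that the choice of $n_2$ forces this bound to be at most $\eps$: indeed $n_2\ge d\eps^{-1/r}(C_r d M)^{1/r}$ gives $n_2^{-r}\le \eps/(d^r C_r d M)=\eps/(C_r M d^{r+1})$, so $C_r M d^{r+1}n_2^{-r}\le\eps$. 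Thus on the success event the output is an $\eps$-approximation, which yields $\P(\norm{f-S_{n_1,n_2}(f)}_\infty\le\eps)\ge 1-(1-V)^{n_1}$.

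There is essentially no obstacle here; the only point requiring a word of care is the sub-case $\norm{f}_\infty$ small or, more precisely, making sure the argument covers \emph{every} $f\in F^{r,V}_{M,d}$ uniformly rather than a fixed one. Since the box $A$ and hence the probability bound $(1-V)^{n_1}$ is uniform over the class (the volume threshold $V$ is the same for all $f$), and since Lemma~\ref{lemma1} is likewise uniform over $F^r_{M,d}$, the estimate holds for all $f\in F^{r,V}_{M,d}$ with the stated $n_1,n_2$. One should also note that the algorithm never needs to verify the $L_\infty$ error at runtime: the deterministic guarantee of Lemma~\ref{lemma1} does the work once a nonzero point is in hand. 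Finally, I would remark in passing that the bound is independent of $M$ and $\eps$ in its dependence on $n_1$ — those parameters enter only through $n_2$ — which is exactly the phenomenon the paper wishes to highlight.
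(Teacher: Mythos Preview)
Your proposal is correct and follows essentially the same route as the paper: bound below the probability that some $X_i$ hits the guaranteed box $A$ (equivalently, that the stopping time $T=\min\{i:f(X_i)\neq0\}$ satisfies $T\le n_1$), and then invoke Lemma~\ref{lemma1} with the chosen $n_2$ to handle phase two. The paper's write-up is terser---it simply records $\P(T\le n_1)\ge 1-(1-V)^{n_1}$ without spelling out the arithmetic for $n_2$---but the logic is identical; your only imprecision is the parenthetical ``$z^*=X_i$'', since the algorithm may exit phase one at an earlier index, but any nonzero point suffices for Lemma~\ref{lemma1} so this is harmless.
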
 

\begin{proof}
Let $(X_i)_{1 \leq i \leq n_1}$ be an i.i.d. sequence 
of uniformly distributed 
random variables with values in $[0,1]^d$ and let 
\[
 T = \min\{ i\in \N \mid f(X_i) \not = 0  \}.
\]
Because of the choice of $n_2$
we have by Lemma~\ref{lemma1} that the error is smaller than $\eps$
if we found $z^*$ in the first step of Algorithm~\ref{alg: new_class_ran}
denoted by $S_{n_1,n_2}$.
Thus
\begin{align*}
\P(\norm{f-S_{n_1,n_2}(f)}_{\infty} \leq \eps) 
& \geq \P(T \leq n_1) 
% = \sum_{i=1}^{n_1} \P(T=i) 
% = \sum_{i=1}^{n_1} (1-V)^{i-1} V 
= 1-(1-V)^{n_1}. \qedhere
 \end{align*}
\end{proof} 
\begin{rem}
For the result of Theorem~\ref{thm: simple_prob_bound3} it is enough 
to assume $\lambda_d(\{f\not =0\})>V$ and $f\in F_{M,d}^r$. Thus, it is not necessary to have a box
with Lebesgue measure larger $V$ such that $f$ is not zero on this box.
\end{rem}

Actually we can use a single 
sequence of uniformly i.i.d 
random variables $(X_i)_{1\leq i \leq n_1}$  
for any function $f\in F^{r,V}_{M,d}$ 
and still the probability that a
point $z^*$ is found decreases exponentially fast for increasing $n_1$. 
More exactly,  for $n_1 \in \N$ we obtain 

\begin{prop}  
Let $n_2\in \N$ with $n_2 \geq d \max\{ \eps^{-1/r}(C_r d M )^{1/r},2\}$.
Then
\begin{align} \label{eq: forall_inside}
\P(e^{\text{det}}(S_{n_1,n_2},F^{r,V}_{M,d})\leq \eps) 
% = \P(\sup_{f\in F^{r,V}_{M,d}} \norm{f-S_{n_1,n_2}(f)}_{\infty} \leq \eps) 
%  & = \P( D(\{ X_1,\dots,X_{n_1} \}) \leq V ) \\
 & \geq 1-\left( e n_1/d\right)^{2d} 2^{-V n_1/2}.
\end{align}
\end{prop}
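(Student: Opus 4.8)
The plan is to deduce the statement from the dispersion bound of Proposition~\ref{prop: disp_bound}, exploiting the fact that Algorithm~\ref{alg: new_class_ran} uses \emph{one and the same} random sample $(X_i(\omega))_{1\le i\le n_1}$ for every input $f\in F^{r,V}_{M,d}$. For a fixed realization $\omega\in\Omega$ the map $f\mapsto S_{n_1,n_2}(f)(\omega)$ is an (adaptive) deterministic algorithm using at most $n_1+n_2$ function values, so the quantity $e^{\rm det}(S_{n_1,n_2},F^{r,V}_{M,d})$ appearing in \eqref{eq: forall_inside} is the well-defined random variable $\omega\mapsto\sup_{f\in F^{r,V}_{M,d}}\norm{f-S_{n_1,n_2}(f)(\omega)}_{\infty}$.

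The crucial step is the event inclusion
\[
\{\,{\rm disp}(X_1,\dots,X_{n_1})\le V\,\}\subseteq\{\,e^{\rm det}(S_{n_1,n_2},F^{r,V}_{M,d})\le\eps\,\}.
\]
To establish it, fix $\omega$ with ${\rm disp}(X_1(\omega),\dots,X_{n_1}(\omega))\le V$ and let $f\in F^{r,V}_{M,d}$ be arbitrary. By definition of the class there is a box $A\in R$ with $\lambda_d(A)>V$ such that $f(x)\ne 0$ for all $x\in A$; since the dispersion of the sample is at most $V<\lambda_d(A)$, the box $A$ cannot avoid the sample, so some $X_i(\omega)\in A$, whence $f(X_i(\omega))\ne 0$. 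Consequently the first phase of $S_{n_1,n_2}$ stops with $z^*=X_T(\omega)$, $T\le n_1$, satisfying $f(z^*)\ne 0$, and the second phase runs the deterministic algorithm of Lemma~\ref{lemma1}. As $n_2\ge d\max\{\eps^{-1/r}(C_r dM)^{1/r},2\}$, one has $C_r M d^{r+1}n_2^{-r}\le\eps$, so Lemma~\ref{lemma1} gives $\norm{f-S_{n_1,n_2}(f)(\omega)}_{\infty}=\norm{f-A_{n_2}(f)}_{\infty}\le\eps$. Since $f$ was arbitrary, $e^{\rm det}(S_{n_1,n_2},F^{r,V}_{M,d})\le\eps$ for this $\omega$, which proves the inclusion.

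Finally I would apply Proposition~\ref{prop: disp_bound} to the i.i.d.\ uniform sample $(X_i)_{1\le i\le n_1}$, obtaining $\P({\rm disp}(X_1,\dots,X_{n_1})\le V)\ge 1-(e\,n_1/d)^{2d}2^{-Vn_1/2}$, and combine it with the event inclusion above to conclude \eqref{eq: forall_inside}. I do not expect a genuine obstacle here: the single new point compared with Theorem~\ref{thm: simple_prob_bound3} is conceptual, namely that one random point set of small dispersion certifies the error bound \emph{simultaneously} for all $f$ in the class (rather than for each $f$ separately with an $f$-dependent sample), and the inequality $C_r M d^{r+1}n_2^{-r}\le\eps$ is just the routine identity $d\,\eps^{-1/r}(C_r dM)^{1/r}=(C_r M d^{r+1}/\eps)^{1/r}$.
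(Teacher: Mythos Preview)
Your proposal is correct and follows essentially the same route as the paper: you establish the event inclusion $\{{\rm disp}(X_1,\dots,X_{n_1})\le V\}\subseteq\{\sup_{f\in F^{r,V}_{M,d}}\norm{f-S_{n_1,n_2}(f)}_\infty\le\eps\}$ via the definition of the class and Lemma~\ref{lemma1}, and then invoke Proposition~\ref{prop: disp_bound}. The paper presents the same inclusion as a chain of set identities, but the argument is identical in substance.
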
 

\begin{proof} 
Again $(X_i)_{1\leq i\leq n_1}$ is an i.i.d. sequence of uniformly 
distributed random variables in $[0,1]^d$ and 
$ n_2 $ is chosen 
such that the error bound of Lemma~\ref{lemma1} is smaller than $\eps$
if we found $z^*$ in the first step.
Let us denote
$S_{n_1,n_2}(f,\omega)$ for $S_{n_1,n_2}(f)$ which uses
the points $x_i=X_i(\omega)$ for $1\leq i \leq n_1$.
Then
\begin{align*}
& \quad\; \{ \omega \in \Omega \mid {\rm disp}(X_1(\omega),\dots,X_n(\omega)) 
\leq V \} \\
& = \{ \omega \in \Omega \mid \sup_{A\in R,\; 
A\cap \{X_1(\omega),\dots,X_{n_1}(\omega) \}=\emptyset} \lambda_d(A) \leq V \} \\
& = \{ \omega \in \Omega \mid \forall A\in R \quad
\mbox{with} \quad \lambda_d(A) > V\quad \exists j\in\{1,\dots,n_1\}\quad 
\mbox{with} \quad X_j(\omega) \in A \}\\
& = \{ \omega \in \Omega \mid \forall f\in F^{r,V}_{M,d} \quad 
\exists \; j\in\{1,\dots,n_1\}\quad \mbox{with} 
\quad f(X_j(\omega)) \not = 0 \} \\
& \subseteq \{ \omega \in \Omega \mid \forall f\in F^{r,V}_{M,d} \quad
\mbox{holds}\quad \norm{f-S_{n_1,n_2}(f)(\omega)}_{\infty} \leq \eps \} \\
& = \{ \omega \in \Omega \mid \sup_{f\in F^{r,V}_{M,d}} 
\norm{f-S_{n_1,n_2}(f)(\omega)}_{\infty} \leq \eps \} .
\end{align*}
Finally, for $n_1\in \N$ we obtain by Proposition~\ref{prop: disp_bound}  
\begin{align*}
\P(\sup_{f\in F^{r,V}_{M,d}} \norm{f-S_{n_1,n_2}(f)}_{\infty} \leq \eps) 
\geq \P( {\rm disp}( X_1,\dots,X_{n_1} ) \leq V ) 
\geq 1-\left({e} n_1/d\right)^{2d} 2^{-V n_1/2}.
\end{align*}
\end{proof} 

By a simple argument 
we can also derive from \eqref{eq: forall_inside} the existence of a
``good''
deterministic algorithm.
Namely, for $n_1 \geq 16 d\, V^{-1} \log_2(13 V^{-1})$ 
the right-hand-side of \eqref{eq: forall_inside} is strictly larger than
zero, which implies that 
there exists a realization of $(X_i)_{1\leq i \leq n_1}$, 
say  $(x_i)_{1\leq i \leq n_1}$, such that
\[
\sup_{f\in F^{r,V}_{M,d}} \norm{f-S_{n_1,n_2}(f)}_{\infty} 
\leq \eps.
\]
This gives another proof of  Theorem~\ref{theorem4}.

\bigskip

\noindent
{\bf Acknowledgement.}
We thank Mario Ullrich, 
Henryk Wo\'zniakowski and two anonymous referees for valuable comments. 
This research was supported by the DFG-priority program 1324, 
the DFG Research Training Group 1523 and the ICERM at Brown University.

\end{document}